 \newtheorem{thm}{Theorem}[section]
 \newtheorem{cor}[thm]{Corollary}
 \newtheorem{lem}[thm]{Lemma}
 \newtheorem{prop}[thm]{Proposition}
 \theoremstyle{definition}
 \theoremstyle{remark}
 \numberwithin{equation}{section}
\newcommand{\Lip}{\operatorname{Lip}}
\newcommand{\Id}{\operatorname{Id}}
\begin{document}

\title[Lipschitz vs Linear Numerical Index]{Lipschitz vs Linear Numerical Index\\ in certain Banach spaces}

\author[Antonio P\'{e}rez-Hern\'{a}ndez]{Antonio P\'{e}rez-Hern\'{a}ndez}

\address{%
Departamento de Matem\'{a}tica Aplicada I\\
Escuela T\'{e}cnica Superior de Ingenieros Industriales\\
Universidad Nacional de Educación a Distancia\\
28040 Madrid (Spain)\\[1mm]
Orcid: \texttt{0000-0001-8600-7083}}

\email{antperez@ind.uned.es}

\subjclass{Primary 46B04; Secondary 47A12, 46B20. }

\keywords{Banach space, numerical index, numerical radius, Lipschitz operator, Gaussian measure, invariant mean.}

\date{January 1, 2004}

\thanks{ This work was partially supported by a PhD fellowship of Fundaci\'{o}n ``la Caixa'' (ID 100010434), ref. LCF/BQ/DE13/10300005. }

\begin{abstract}
We show that for real Banach spaces that are either separable or dual spaces, the Lipschitz numerical index coincides with the classical (linear) numerical index. This result provides partial evidence toward the question posed by Wang, Huang, and Tan (2014) of whether these two quantities coincide for every real Banach space. Our approach relies on two standard linearization techniques for Lipschitz maps: differentiation via convolution with Gaussian probability measures, and  invariant means.
\end{abstract}

\maketitle

\section{Introduction}

The theory of numerical index of Banach spaces has its roots in the theory of numerical range of matrices and operators between Hilbert spaces. In the 1950s, it gained further significance through applications in Banach algebra theory, which led to its extension to general Banach spaces in independent works by Lumer \cite{Lumer61} and Bauer \cite{Bauer62}. Since then, the theory has evolved and consolidated into a distinct area of functional analysis, with connections to operator theory and geometry of Banach spaces. We refer to \cite{BoDu71, BoDu73, GustafsonRao97,Martin00, Kadets06, CaPa14, Kadets20} for further details and references.

Let $X$ be a real or complex Banach space, and denote by $X^{\ast}$ its dual space. We will use the notation $x^{\ast}(x)  = \langle x^{\ast},x \rangle = \langle x,x^{\ast} \rangle $ for every $x \in X$ and $x^{\ast} \in X^{\ast}$. We write $B_{X}$ and $S_{X}$ for its closed unit ball and its unit sphere, respectively. Let $\mathcal{L}(X)$ denote the space of all bounded linear operators on $X$. For each $T \in \mathcal{L}(X)$, the \emph{numerical range} of $T$ is defined by
\[
W(T) := \left\{ \langle x^{\ast}, Tx \rangle \colon x \in S_{X} \,, \, x^{\ast} \in S_{X^{\ast}} \,, \, x^{\ast}(x) = 1 \right\},
\]
and its \emph{numerical radius} by
\[
w(T) := \sup{\{ |\lambda| \colon \lambda \in W(T) \}}.
\]
so that $w(T) \leq \| T\|$. The \emph{numerical index} of the Banach space $X$ is then defined by
\[
\begin{split} 
n(X) & = \inf{\{ w(T) \colon T \in \mathcal{L}(X)\,,\, \| T\| = 1 \}}\\[1mm]
&  = \max{\{ \lambda \geq 0 \colon \lambda \| T\| \leq w(T) \mbox{ for every } T \in \mathcal{L}(X) \}}.  
\end{split}
\]
We can also understand this constant as a measure of rotundity around the identity operator $\Id_{X}$ in the unit sphere of $\mathcal{L}(X)$, since $n(X)$ is the largest constant $\alpha \geq 0$ such that
\[ \max_{ |\theta|=1} \| \operatorname{Id}_{X} + \theta T\| \geq 1 + \alpha \| T\| \]
for every operator $T$, see e.g. \cite[Proposition 3.3]{Kadets20}. In particular, if $n(X)>0$ then $\operatorname{Id}_{X}$ is a strongly extreme point of the unit ball of $\mathcal{L}(X)$. 

These notions have been extended in several directions. In this work, we focus on a non-linear variant introduced by X. Huang, D. Tan, and R. Wang in \cite{Wang14}, known as the \emph{Lipschitz numerical index}. Let $\Lip_{0}(X)$ denote the Banach space of all Lipschitz maps $F:X \longrightarrow X$ such that $F(0) = 0$, equipped with the norm
\begin{equation*} 
\| F\|_{L}:=\sup{\left\{ \frac{\| F(x_{1}) - F(x_{2})\|}{\| x_{1} - x_{2}\|} \colon x_{1}, x_{2} \in X, x_{1} \neq x_{2} \right\}}< \infty.
\end{equation*}
 For every $F \in \Lip_{0}(X)$, the \emph{numerical range} of $F$ is defined by 
\begin{multline*}
W_{L}(F) = \left\{ \frac{\langle x^{\ast}, F(x_{1}) - F (x_{2}) \rangle}{\| x_{1} - x_{2}\|} \colon  \, x^{\ast} \in S_{X^{\ast}}\,, \, x_{1}  \neq  x_{2} \in X\,, \,\right.\\
\left.\frac{\langle x^{\ast}, x_{1} - x_{2} \rangle}{\| x_{1} - x_{2}\| } = 1 \right\},
\end{multline*}
and its \emph{numerical radius} by
\[
w_{L}(F) = \sup{\{ |\lambda| \colon \lambda \in W_{L}(F) \}}.
\]
The \emph{Lipschitz numerical index} of the Banach space $X$ is then defined by
\[
\begin{split} 
n_{L}(X) & = \inf{\{ w_{L}(F) \colon F \in \Lip_{0}(X), \| F\|_{L} = 1 \}}\\[1mm]
&  = \max{\{ \lambda \geq 0 \colon \lambda \| T\|_{L}   \leq w_{L}(T) \mbox{ for every } T \in \Lip_{0}(X) \}}.  
\end{split}
\]
Notice that the canonical inclusion $\mathcal{L}(X) \hookrightarrow \Lip_{0}(X)$ is a linear isometry, and that if $F$ is linear $W_{L}(F) = W(F)$ and  $w_{L}(F) = w_{L}(F)$. Therefore, 
\[ n_{L}(X) \leq n(X)\,. \]
A central question posed in \cite{Wang14} is whether equality always holds. The answer to this question depends on whether the underlying field is $\mathbb{R}$ or $\mathbb{C}$. 

\subsubsection*{Complex case}

In the complex case, the two indices may differ. A simple example is given by the complex Hilbert space $X=\mathbb{C}^{2}$ with the usual norm
\[ \| (z_{1}, z_{2}) \| = \sqrt{|z_{1}|^2 + |z_{2}|^2}\,. \]
It is well-known that its (linear) numerical index is  $n(\mathbb{C}^{2}) = 1/2$. In fact, this holds for every Hilbert space of dimension larger than two, see \cite{Martin00}. However, its Lipschitz numerical index is $n_{L}(\mathbb{C}^2)=0$. To see this, consider the map $F: \mathbb{C}^{2} \rightarrow \mathbb{C}^{2}$ given by
\[ F(z_{1}, z_{2}) = (\overline{z_{2}}, - \overline{z_{1}}), \]
which is $\mathbb{R}$-linear (not $\mathbb{C}$-linear) and Lipschitz with $\| F\|_{L} = 1$.  The fact that $F$ is $\mathbb{R}$-linear yields that its Lipschitz numerical range can be actually rewritten as
\[
W_{L}(F) = \left\{ \langle x^{\ast}, F(y) \rangle \colon  \, x^{\ast} \in S_{X^{\ast}}\,, \, y \in S_X\,, \, \langle x^{\ast}, y \rangle = 1 \right\}\,.
\]
Given any $y=(w_{1},w_{2}) \in S_{X}$ with $\| y\|=1$, there is a unique functional $x^{\ast} \in S_{X^\ast}$ with $x^{\ast}(y)=1$, namely
\[ x^{\ast}(z_{1}, z_{2}) = \overline{w}_{1}z_{1} + \overline{w}_{2} z_{2}\,. \]
We then compute
\[ \langle x^{\ast} , F(y) \rangle = x^{\ast}(\overline{w}_{2}, - \overline{w}_{1}) = \overline{w}_{2} \overline{w}_{1} - \overline{w}_{1} \overline{w}_{2} = 0\,. \]
Thus, $W_{L}(F) = \{ 0\}$, so its numerical radius is $w_{L}(F) =0$, which implies that $n_{L}(\mathbb{C}^2) = 0$.

Kadets et al. \cite{Kadets15} have demonstrated that for (complex) \emph{lush spaces} both indices coincide and are equal to one. In particular, if the Banach space $X$ is \emph{strongly regular} (e.g., RNP spaces) or does not contain copies of $\ell_{1}$ (e.g., Asplund spaces), the condition $n(X)=1$ implies that $n_{L}(X) = 1$, as these spaces are lush by the results of \cite{Aviles10}. Whether this implication holds in full generality remains an open question.

Furthermore, it is known that for complex Banach spaces, the set of possible values for $n(X)$ is the interval $[e^{-1}, 1]$, see \cite{DMPW70}. On the other hand, the set of possible values for the Lipschitz numerical index appears to be $[0, 1]$. This follows from \cite[Proposition 2.2]{Choi25}, noting that the Lipschitz numerical index of $(\mathbb{C}^2, \| \cdot \|_{\infty})$ is one (by the aforementioned result of Kadets et al.), while the Lipschitz numerical index of $(\mathbb{C}^2, \| \cdot \|_{2})$ is zero, as shown above. It is therefore natural to ask which values are realized by the pair $(n(X), n_{L}(X))$ as $X$ ranges over all complex Banach spaces. However, we do not pursue this question in the present work.

\subsubsection*{Real case}

In contrasts to the complex case, no known example shows that $n_{L}(X)$ and $n(X)$ may be different when $X$ is a real space. This motivates the conjecture $n_{L}(X)=n(X)$ for every real Banach space $X$. In fact, partial results support this claim:  Wang et al.  proved that if $X$ has the Radon-Nikod\'ym property (RNP) then the equality holds; moreover, they showed that if $X$ is a real lush space, then both indices coincide and are equal to one. In particular, if  $X$ does not contain a copy of $\ell_{1}$ or $X$ is \emph{strongly regular}, then the condition $n(X)=1$ yields that $X$ is a (real) \emph{lush space}, as proved in \cite{Aviles10}, and thus $n_{L}(X)=1$ by \cite[Theorem 2.6]{Wang14}.

In this paper, we contribute further evidence to this conjecture. We prove that the equality $n(X) = n_{L}(X)$ holds if $X$ is a (real) separable Banach space (Section \ref{sec:realSeparable}) or if $X$ is a (real) dual Banach space (Section \ref{sec:realDual}). The arguments are based on two distinct techniques for constructing linear operators from Lipschitz maps that preserve certain properties: differentiation via convolution with a Gaussian probability measure, and the use of invariant means.


\section{Real separable Banach spaces}\label{sec:realSeparable}

We first state the main result of the section.

\begin{thm}\label{Theo:EqualityNumIndexesSeparable}
Let $X$ be a real separable Banach space. Then, $n(X) = n_{L}(X)$.
\end{thm}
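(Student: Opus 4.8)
The inequality $n_L(X) \le n(X)$ is already recorded in the excerpt, so the entire content is the reverse inequality $n(X) \le n_L(X)$; equivalently, I would show that $w_L(F) \ge n(X)$ for every $F \in \Lip_0(X)$ with $\|F\|_L = 1$. The plan is to manufacture, out of such an $F$, bounded linear operators $T$ whose numerical radius is controlled by $w_L(F)$ and whose norm is as close to $1$ as we like; then the defining inequality $n(X)\|T\| \le w(T)$ finishes the argument. The device that turns the Lipschitz map into a differentiable one, and hence produces linear operators as derivatives, is convolution against Gaussian measures, which is available precisely because $X$ is separable.

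Concretely, fix a centered, nondegenerate Gaussian measure $\mu$ on $X$ (its Cameron--Martin space $H$ is then a dense subspace of $X$), and for $t>0$ let $\mu_t$ be its rescaling by $t$. Set
\[
F_t(x) := \int_X F(x+y)\, d\mu_t(y) - \int_X F(y)\, d\mu_t(y),
\]
so that $F_t \in \Lip_0(X)$. Two elementary estimates, both obtained by pulling functionals and norms inside the integral, form the backbone of the proof. First, $\|F_t\|_L \le \|F\|_L$, since $(x_1+y)-(x_2+y)=x_1-x_2$. Second, and for the same reason, if $x^* \in S_{X^*}$ and $x_1 \ne x_2$ satisfy $\langle x^*, x_1-x_2\rangle = \|x_1-x_2\|$, then each integrand $\langle x^*, F(x_1+y)-F(x_2+y)\rangle/\|x_1-x_2\|$ lies in $W_L(F)$, so the corresponding quotient for $F_t$ is an average of numbers of modulus at most $w_L(F)$; hence $w_L(F_t) \le w_L(F)$.

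The gain from the convolution is regularity: $F_t$ is Gateaux differentiable along the Cameron--Martin directions at every point, and its derivative $D^H F_t(x_0)$ extends by density to a bounded operator $T_{x_0} \in \mathcal{L}(X)$. Writing directional difference quotients along unit vectors $h \in H$ with $\langle x^*, h\rangle = 1$ and passing to the limit shows $W(T_{x_0}) \subseteq \overline{W_L(F_t)}$, whence $w(T_{x_0}) \le w_L(F_t) \le w_L(F)$. A mean value argument along segments in $H$, together with the density of $H$ and the Lipschitz continuity of $F_t$, identifies $\sup_{x_0}\|T_{x_0}\|$ with $\|F_t\|_L$. Combining with $n(X)\|T_{x_0}\| \le w(T_{x_0})$ and taking the supremum over $x_0$ yields
\[
n(X)\,\|F_t\|_L \le w_L(F).
\]

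It remains to send $t \to 0$. Since $\|F_t(x)-F(x)\| \le \|F\|_L \int_X \|y\|\, d\mu_t(y) \to 0$, the maps $F_t$ converge to $F$ pointwise, so $\liminf_{t\to 0}\|F_t\|_L \ge \|F\|_L$; combined with $\|F_t\|_L \le \|F\|_L$ this gives $\|F_t\|_L \to 1$. Therefore $n(X) \le w_L(F)$, and taking the infimum over all normalized $F$ gives $n(X) \le n_L(X)$, as desired. I expect the main obstacle to be the infinite-dimensional differentiation step: unlike the RNP setting, $F_t$ need only be differentiable along the proper dense subspace $H$, so care is required both in producing the genuine operator $T_{x_0} \in \mathcal{L}(X)$ by extension and in checking that its full numerical range (over all of $S_X$, not merely directions in $H$) stays inside $\overline{W_L(F)}$ while $\|T_{x_0}\|$ still recovers the Lipschitz constant. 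Verifying the precise differentiability and Cameron--Martin facts for Gaussian convolutions is where the real work lies.
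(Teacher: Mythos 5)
Your overall scheme is the paper's own: convolve $F$ with rescaled Gaussian measures $\mu_t$, observe that $\|F_t\|_L \leq \|F\|_L$ and $w_L(F_t)\leq w_L(F)$ by pushing the functional inside the integral (both estimates are exactly the paper's Proposition on $F\ast\gamma$), extract derivative operators, bound their numerical radius by $w_L(F)$, recover norms close to $\|F_t\|_L$ via a mean value argument with a norming functional, and let $t\to 0$ using the uniform convergence $\|F_t(x)-F(x)\|\leq \|F\|_L\int\|y\|\,d\mu_t(y)$. The divergence is at the differentiability input, and that is where your write-up has a genuine gap. The paper invokes Bogachev--Shkarin (Theorem 7 of their 1988 paper, as cited via Godefroy--Kalton): for a nondegenerate mean-zero Gaussian measure with finite first moment, $F\ast\gamma$ is \emph{weakly Gateaux differentiable at every point in every direction of $X$}, so the derivative $dF_t[x_0]$ is already a bounded operator on all of $X$, and for any norming pair ($x\in S_X$, $x^\ast\in S_{X^\ast}$, $x^\ast(x)=1$) the difference quotient $\langle x^\ast, F_t(x_0+sx)-F_t(x_0)\rangle/\|sx\|$ lies in $W_L(F_t)$ verbatim, giving $w(dF_t[x_0])\leq w_L(F_t)$ with no approximation at all.

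You instead claim differentiability only along the Cameron--Martin space $H$ and extend $D^HF_t(x_0)$ to $T_{x_0}\in\mathcal{L}(X)$ by density. The extension itself is unproblematic (difference quotients are bounded by $\|F_t\|_L\,\|h\|_X$, and the $H$-derivative is linear), but your claim that ``passing to the limit shows $W(T_{x_0})\subseteq \overline{W_L(F_t)}$'' does not follow as stated: for a norming pair $(x,x^\ast)$ with $x\notin H$, the best you can do is take $h\in H$ with $\|h-x\|$ small, and then $\langle x^\ast,h\rangle/\|h\|<1$, so the difference quotient along $h$ is \emph{not} an element of $W_L(F_t)$; you only obtain $|\langle x^\ast,T_{x_0}x\rangle|\leq v_\delta(F_t)$, where $v_\delta$ is the supremum over $(1-\delta)$-almost-norming pairs. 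To close the argument you need $\lim_{\delta\to 0^+}v_\delta(F_t)=w_L(F_t)$, a Bishop--Phelps--Bollob\'as-based lemma which the paper proves only in the dual setting (its Lemma on $v_\delta$ for $\Lip_0(X^\ast)$) and which you would have to state and prove for general $X$ --- the proof does adapt (perturb the almost-norming pair to an exactly norming one and estimate the change in the quotient by $\|F\|_L(2\sqrt{2\delta}+2\delta)$), but you supply neither this lemma nor the vector-valued $H$-differentiability facts, deferring both as ``where the real work lies.'' As written the proof is therefore incomplete; the simplest repair is the one the paper takes, namely quoting Bogachev--Shkarin to get weak Gateaux differentiability in all directions at every point, which makes the density extension and the approximate-norming machinery unnecessary.
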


Prior to the proof, we need to recall some basics on weak Gateaux differentiability and convolution with Gaussian measures of Lipschitz maps. 

Let $F:X \to X$ be a Lipshitz map. We say that $F$ is \emph{weakly Gateaux differentiable} at $x_{0} \in X$ if there is a linear map $T: X \to X$ such that
\[ \langle x^{\ast},  T x \rangle = \lim_{t \rightarrow 0}{\frac{\langle x^{\ast}, F(x_{0} + t x) -  F(x_{0}) \rangle}{t}} \]
for every $x \in X$ and $x^{\ast} \in X^{\ast}$. In this case, the operator $T$ is unique and will be denoted by $dF[x_{0}]$. The numerical index of $F$ and that of its weak derivatives can be related as follows.

\begin{lem}\label{Lemm:GateauxNumericalRadius} 
Let $X$ be a real Banach space and let $F: X \to X$ be weakly Gateaux differentiable at $x_{0} \in X$. Then
\begin{equation}\label{equa:GateauxNumericalRadius} 
\| dF[x_{0}] \| \leq \| F\|_{L} \quad \quad \text{and} \quad \quad w\left(dF[x_{0}]\right) \leq w_{L}(F) \,.
\end{equation}
\end{lem}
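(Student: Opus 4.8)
The plan is to establish the two inequalities in \eqref{equa:GateauxNumericalRadius} separately, both directly from the definition of the weak Gateaux derivative. In each case the strategy is identical: bound the relevant difference quotient of $F$ for every fixed increment $t \neq 0$, and then pass to the limit $t \to 0$. Throughout I write $T := dF[x_{0}]$.

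For the norm bound, I would fix $x \in X$ and $x^{\ast} \in S_{X^{\ast}}$, and for each $t \neq 0$ use the Lipschitz condition to estimate
\[
\left| \frac{\langle x^{\ast}, F(x_{0} + tx) - F(x_{0}) \rangle}{t} \right| \leq \frac{\| F(x_{0}+tx) - F(x_{0})\|}{|t|} \leq \| F\|_{L}\, \| x\|\,.
\]
Letting $t \to 0$ gives $|\langle x^{\ast}, Tx \rangle| \leq \| F\|_{L}\, \| x\|$. Taking the supremum over $x^{\ast} \in S_{X^{\ast}}$ and invoking the Hahn--Banach theorem, which guarantees $\| Tx\| = \sup_{x^{\ast} \in S_{X^{\ast}}} |\langle x^{\ast}, Tx \rangle|$, yields $\| Tx\| \leq \| F\|_{L}\, \| x\|$ for all $x$, and hence $\| T\| \leq \| F\|_{L}$.

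For the numerical radius bound, the key observation is that the normalization $\langle x^\ast, x\rangle = 1$ with $x \in S_{X}$, $x^{\ast} \in S_{X^{\ast}}$ defining $W(T)$ matches \emph{exactly} the normalization defining $W_{L}(F)$ when one moves along the ray through $x_{0}$ in the direction $x$. Concretely, let $\lambda = \langle x^{\ast}, Tx \rangle \in W(T)$, so $\| x\| = 1$, $\| x^{\ast}\| = 1$ and $x^{\ast}(x) = 1$. For any $t > 0$ put $x_{1} = x_{0} + tx$ and $x_{2} = x_{0}$; then $\| x_{1} - x_{2}\| = t$ and $\langle x^{\ast}, x_{1} - x_{2}\rangle / \| x_{1} - x_{2}\| = x^{\ast}(x) = 1$, so the pair $(x_{1}, x_{2})$ is admissible for $W_{L}(F)$ and
\[
\frac{\langle x^{\ast}, F(x_{0}+tx) - F(x_{0}) \rangle}{t} = \frac{\langle x^{\ast}, F(x_{1}) - F(x_{2}) \rangle}{\| x_{1} - x_{2}\|} \in W_{L}(F)\,.
\]
Thus each such difference quotient has modulus at most $w_{L}(F)$; letting $t \to 0^{+}$ (which is legitimate since $T$ exists, so the two-sided limit agrees with the one-sided one) gives $|\lambda| \leq w_{L}(F)$. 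As $\lambda \in W(T)$ was arbitrary, $w(T) \leq w_{L}(F)$.

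The proof is essentially mechanical, so there is no serious obstacle; the only point demanding a little care is that it is the \emph{difference quotients}, not their limit $\langle x^{\ast}, Tx\rangle$ itself, that belong to $W_{L}(F)$. Since $W_{L}(F)$ need not be closed, one cannot assert membership of the limit, but this is harmless: each quotient is bounded by the supremum $w_{L}(F)$, and this bound is stable under passage to the limit. The other detail worth flagging is the bookkeeping of signs in the increment $t$, which I sidestep by taking the one-sided limit $t \to 0^{+}$.
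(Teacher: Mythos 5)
Your proof is correct and takes essentially the same approach as the paper's: you bound the difference quotients $\langle x^{\ast}, F(x_{0}+tx)-F(x_{0})\rangle/t$ by $\|F\|_{L}$ for the norm estimate, and observe that for $t>0$ the pair $(x_{0}+tx,\,x_{0})$ with $x^{\ast}(x)=1$ satisfies exactly the normalization defining $W_{L}(F)$, so each quotient is bounded by $w_{L}(F)$, and you then pass to the limit $t\to 0^{+}$. If anything, your explicit restriction to $t>0$ treats the sign of the denominator $\|(x_{0}+tx)-x_{0}\|=|t|$ more carefully than the paper, which nominally asserts the normalization identity for all $t\neq 0$ before taking the one-sided limit.
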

\begin{proof}
Recall that for every $x^{\ast} \in S_{X^{\ast}}$ and $x \in S_{X}$
\begin{equation}\label{equa:GateauxNumericalRadius2} 
 \langle x^{\ast}, dF[x_{0}] (x) \rangle \, = \, \lim_{t \rightarrow 0} \,   , \frac{\langle x^{\ast}, F(x_{0} + t x) - F(x_{0}) \rangle }{\| (x_{0} + tx) - x_{0} \|}  \,.
\end{equation}
This yields that $\langle x^{\ast}, dF[x_{0}] (x) \rangle \leq \| F\|_{L}$ for every $x \in S_{X}$ and $x^\ast \in S_{X^\ast}$, from where the first inequality follows. Moreover, if $x^{\ast}(x)=1$, then for every $t \neq 0$
\[  1 = \langle x^{\ast}, x\rangle = \frac{\langle x^{\ast}, (x_{0}+tx) - x_{0} \rangle}{\| (x_{0}+tx) - x_{0} \|},\]
and thus
\[ \left| \frac{\langle x^{\ast}, F(x_{0} + t x) - F(x_{0}) \rangle }{\| (x_{0} + tx) - x_{0} \|} \right| \leq w_{L}(F)\,.\]
Taking the limit when $t \rightarrow 0^+$ in the previous expression, we deduce that
\[
 |\langle x^{\ast}, dF[x_{0}] (x) \rangle| \, \leq \, w_{L}(F)  \,.
 \]
Finally, taking then supremum over $x \in S_{X}$ and $x^\ast \in S_{X^\ast}$ with $x^{\ast}(x)=1$, the second inequality follows.
\end{proof}

Let us next recall some basic facts concerning Gaussian measures on Banach spaces. We refer to \cite[Chapter 6]{BenyaminiLindenstrauss00} for terminology and further details (see also \cite{Bogachev98}).

\begin{lem}\label{Lemm:measureDifferentiable}
Let $X$  be a real separable Banach space. Then, there exists a nondegenerate Gaussian probability measure $\gamma$ on $X$ with mean zero such that
\begin{equation}\label{equa:GaussianIntegratesCone} 
\int_{X}{\| u\| \: d \gamma(u)} < + \infty.
\end{equation}
\end{lem}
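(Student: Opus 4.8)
The plan is to realize $\gamma$ as the distribution of a convergent Gaussian random series, since a single such construction delivers all four required properties at once (Gaussian, mean zero, nondegenerate, and norm-integrable). Since $X$ is separable, I would fix a countable dense subset $\{y_{n}\}_{n\geq 1}$ of $X$ and choose scalars $c_{n}>0$ with $\sum_{n} c_{n}(1+\|y_{n}\|) < \infty$; setting $x_{n} := c_{n} y_{n}$ ensures $\sum_{n} \|x_{n}\| < \infty$ while $\operatorname{span}\{x_{n}\} = \operatorname{span}\{y_{n}\}$ stays dense in $X$. On an auxiliary probability space $(\Omega,\mathcal{F},\mathbb{P})$ I would take independent standard real Gaussian variables $(g_{n})$ and form the random series $S := \sum_{n} g_{n} x_{n}$.

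The first step is to check that $S$ converges in $X$ almost surely. Since $\mathbb{E}|g_{n}| = \sqrt{2/\pi}$, Tonelli's theorem gives
\[ \mathbb{E}\sum_{n} |g_{n}|\, \|x_{n}\| = \sqrt{\tfrac{2}{\pi}} \, \sum_{n} \|x_{n}\| < \infty, \]
so $\sum_{n} |g_{n}|\, \|x_{n}\| < \infty$ almost surely, whence $S$ converges absolutely in $X$ on a set of full measure and $\omega \mapsto S(\omega)$ is Borel measurable into $X$ as an almost-sure limit of measurable partial sums. I would then define $\gamma := S_{\#}\mathbb{P}$, the law of $S$, a Borel probability measure on $X$.

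It remains to verify the four properties. For any $x^{\ast} \in X^{\ast}$, continuity yields $\langle x^{\ast}, S\rangle = \sum_{n} g_{n}\, \langle x^{\ast}, x_{n}\rangle$ almost surely; as an $L^{2}$-convergent series of independent centred Gaussians (note $\sum_{n} \langle x^{\ast}, x_{n}\rangle^{2} \leq \|x^{\ast}\|^{2} \sum_{n} \|x_{n}\|^{2} < \infty$), this is a centred real Gaussian with variance $\sigma^{2}(x^{\ast}) = \sum_{n} \langle x^{\ast}, x_{n}\rangle^{2}$, and the same reasoning shows joint Gaussianity of finitely many such projections, so $\gamma$ is a mean-zero Gaussian measure. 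Nondegeneracy is precisely the assertion that $\sigma^{2}(x^{\ast})>0$ for $x^{\ast}\neq 0$: if $\sigma^{2}(x^{\ast})=0$ then $x^{\ast}$ vanishes on every $x_{n}$, hence on the dense subspace $\operatorname{span}\{x_{n}\}$, forcing $x^{\ast}=0$. Finally, integrability of the norm is immediate from the very estimate used for convergence:
\[ \int_{X} \|u\|\, d\gamma(u) = \mathbb{E}\,\|S\| \leq \mathbb{E}\sum_{n} |g_{n}|\,\|x_{n}\| = \sqrt{\tfrac{2}{\pi}} \, \sum_{n} \|x_{n}\| < \infty. \]

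The only genuinely delicate point in a general Banach space is the norm convergence of the Gaussian series $\sum_{n} g_{n} x_{n}$, which for arbitrary coefficients is controlled by the type of $X$. I sidestep this entirely by choosing the coefficients summable, so that convergence is absolute and the identical bound also hands us the $L^{1}$-integrability of the norm for free. Alternatively one could invoke the abstract existence of a nondegenerate centred Gaussian measure on a separable $X$ together with Fernique's theorem to obtain even exponential integrability of the norm; I prefer the explicit random-series construction because it is more direct and keeps the argument self-contained.
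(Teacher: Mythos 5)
Your proof is correct. The paper does not actually prove this lemma --- it recalls it as a standard fact with pointers to Benyamini--Lindenstrauss (Chapter 6) and Bogachev --- and your construction is precisely the standard argument behind those references: realize $\gamma$ as the law of a Gaussian random series $S=\sum_n g_n x_n$ whose coefficients are a scaled dense sequence chosen with $\sum_n \|x_n\|<\infty$, so that convergence is absolute almost surely, each one-dimensional projection $\langle x^{\ast},S\rangle$ is centred Gaussian by $L^2$-convergence, nondegeneracy follows from density of $\operatorname{span}\{x_n\}$, and the norm-integrability $\mathbb{E}\|S\|\leq \sqrt{2/\pi}\,\sum_n\|x_n\|<\infty$ comes for free from the same estimate; all four verifications are sound, so you have in effect supplied the proof the paper leaves to the literature.
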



The next fundamental result shows that convoluting a Lipschitz map with a Gaussian measure gives a new Lipschitz map featuring better differentiability properties.

\begin{prop}\label{Prop:sufConditionWeakGateauxDiff}
Let $X$ be a real separable Banach space and let $\gamma$ be a Gaussian measure on $X$ as in Lemma \ref{Lemm:measureDifferentiable}.  If $F: X \rightarrow X$ is a Lipshitz operator, then the map $F_{\gamma}: X \rightarrow Y$ defined by
\[ F_{\gamma}(x) = (F \ast \gamma)(x) = \int_{X}{\left[F(x + u) - F(u)\right] \: d \gamma(u)} \,\,\, , \,\,\, x \in X \]
is a Lipschitz operator satisfying $F_{\gamma}(0)=0$,  $\| F_{\gamma}\|_{L} \leq \| F\|_{L}$ and it is weakly Gateaux differentiable at every $x \in X$. Moreover, $w_{L}(F_{\gamma}) \leq w_{L}(F)$.
\end{prop}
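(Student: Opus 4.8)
The plan is to verify each claimed property of $F_\gamma$ in turn, with differentiability being the substantive point and the norm/numerical-radius bounds following from convexity of the integral.

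First I would check the elementary properties. That $F_\gamma(0) = 0$ is immediate from the definition, since the integrand becomes $F(u) - F(u) = 0$. For the Lipschitz bound, given $x_1, x_2 \in X$ I would write
\[
F_\gamma(x_1) - F_\gamma(x_2) = \int_X \bigl[F(x_1 + u) - F(x_2 + u)\bigr]\, d\gamma(u),
\]
so that for any $x^\ast \in S_{X^\ast}$, pairing and using $|\langle x^\ast, F(x_1+u) - F(x_2+u)\rangle| \leq \|F\|_L \, \|x_1 - x_2\|$ pointwise, then integrating against the probability measure $\gamma$, gives $\|F_\gamma(x_1) - F_\gamma(x_2)\| \leq \|F\|_L \|x_1 - x_2\|$. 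I should note that the integrability condition \eqref{equa:GaussianIntegratesCone} guarantees the integrals defining $F_\gamma$ converge: the integrand $F(x+u) - F(u)$ is bounded in norm by $\|F\|_L \|x\|$, which is $\gamma$-integrable since $\gamma$ is a probability measure; in fact one does not even need \eqref{equa:GaussianIntegratesCone} for this particular bound, but that condition is what makes the convolution and the differentiation machinery of \cite[Chapter 6]{BenyaminiLindenstrauss00} applicable.

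The weak Gateaux differentiability at every point is the heart of the matter and where I would defer to the Gaussian differentiation theory. The key mechanism is that convolving with a nondegenerate Gaussian measure smooths the map along all directions in the Cameron--Martin space, and the nondegeneracy together with \eqref{equa:GaussianIntegratesCone} ensures this covers enough directions to produce a genuine linear derivative. Concretely, for fixed $x, x^\ast$ one studies the scalar function $t \mapsto \langle x^\ast, F_\gamma(x_0 + tx)\rangle$ and shows the difference quotient converges as $t \to 0$; the Gaussian structure lets one transfer the difference quotient onto the measure via a change of variables (a translation $u \mapsto u - tx$), after which the Cameron--Martin formula expresses the result as an integral against a density whose derivative in $t$ is controlled. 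This is precisely the setting treated in \cite[Chapter 6]{BenyaminiLindenstrauss00}, so I would invoke it to obtain a linear operator $dF_\gamma[x_0]$ satisfying the defining limit for every $x$ and $x^\ast$ simultaneously. The main obstacle is confirming that the convergence is uniform enough across directions to yield a single bounded linear operator rather than merely directional derivatives, and that the operator is genuinely linear in $x$; both are consequences of the Gaussian differentiation theorem but deserve a careful citation.

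Finally, for the numerical radius bound $w_L(F_\gamma) \leq w_L(F)$, I would argue directly on the Lipschitz numerical range. Fix $x^\ast \in S_{X^\ast}$ and $x_1 \neq x_2$ with $\langle x^\ast, x_1 - x_2\rangle = \|x_1 - x_2\|$. Using the integral representation of $F_\gamma(x_1) - F_\gamma(x_2)$ above and linearity of the pairing, I get
\[
\frac{\langle x^\ast, F_\gamma(x_1) - F_\gamma(x_2)\rangle}{\|x_1 - x_2\|} = \int_X \frac{\langle x^\ast, F(x_1 + u) - F(x_2 + u)\rangle}{\|x_1 - x_2\|}\, d\gamma(u).
\]
For each fixed $u$, the points $x_1 + u$ and $x_2 + u$ satisfy $(x_1 + u) - (x_2 + u) = x_1 - x_2$, so the normalization condition $\langle x^\ast, (x_1+u) - (x_2+u)\rangle = \|(x_1+u)-(x_2+u)\|$ still holds; hence each integrand lies in $W_L(F)$ and has absolute value at most $w_L(F)$. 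Since $\gamma$ is a probability measure, the integral — a convex average — also has absolute value at most $w_L(F)$. Taking the supremum over all admissible $x^\ast, x_1, x_2$ yields $w_L(F_\gamma) \leq w_L(F)$.
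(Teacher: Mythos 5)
Your proposal is correct and follows essentially the same route as the paper: the easy properties are verified pointwise using that $\gamma$ is a probability measure, the weak Gateaux differentiability is delegated to the Gaussian differentiation literature (the paper cites Bogachev--Shkarin and Godefroy--Kalton rather than working through the Cameron--Martin mechanism, exactly as you propose to do via \cite{BenyaminiLindenstrauss00}), and the numerical radius bound rests on the same key observation that translation by $u$ preserves both the difference $x_1 - x_2$ and the normalization condition $\langle x^\ast, x_1 - x_2\rangle = \|x_1 - x_2\|$, so the integrand is pointwise dominated by $w_L(F)$.
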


\begin{proof}
The first two properties, namely $F_{\gamma}(0)=0$ and  $\| F_{\gamma}\|_{L} \leq \| F\|_{L}$ are straightforward. The fact that $F_{\gamma}$ is weakly Gateaux differentiable at every $x \in X$ is claimed in \cite[p. 129]{GodefroyKalton03} and proved in  \cite[Theorem 7]{BogachevShkarin88}.  We then simply have to check the last inequality for the numerical radius. Let $x_{1} \neq x_{2}$ in $X$ and $x^{\ast} \in S_{X^{\ast}}$ with 
\[|\langle x^{\ast}, x_{1} - x_{2} \rangle | = \| x_{1} - x_{2}\|.\] 
Then, we can upper estimate
$$ \frac{|\langle x^{\ast}, F_{\gamma}(x_{1}) - F_{\gamma}(x_{2}) \rangle |}{\| x_{1} - x_{2}\|} \leq \int_{X}{\frac{|\langle x^{\ast}F(x_{1} + u) - F(x_{2} + u) \rangle|}{\| (x_{1}+u) - (x_{2}+u)\|} \: d \gamma(u)} \leq w_{L}(F) $$
since the integrand is pointwise bounded by $w_{L}(F)$ by definition and $\gamma$ is a probability measure. Finally, taking supremum in the precious inequality over $x^{\ast}, x_{1}, x_{2}$ under the above conditions we conclude that $w_{L}(F_{\gamma}) \leq w_{L}(F)$. finishing the proof.
\end{proof}

\noindent We are now prepared to prove the main result of the section.

\begin{proof}[Proof of Theorem \ref{Theo:EqualityNumIndexesSeparable}]
We only need to show that $n_{L}(X) \geq n(X)$. Let us fix an arbitray $\varepsilon \in (0,1)$. By the definition of $n_{L}(X)$, there is a Lipschitz operator $F: X \rightarrow X$ such that 
\[ F(0) = 0 \quad , \quad \| F\|_{L} = 1 \quad , \quad  w_{L}(F) < n_{L}(X) + \varepsilon\,. \]
Next, we use an idea from the proof of \cite[Theorem 7]{BogachevShkarin88}. Let $\gamma$ be a Gaussian measure on $X$ as in Lemma \ref{Lemm:measureDifferentiable}. For each $n \in \mathbb{N}$, consider the map \mbox{$F_{n}: X \rightarrow X$} defined for each $x \in X$ by
\[ F_{n}(x) = \int_{X}{\left[ F\left(x + \frac{u}{n}\right) -F\left( \frac{u}{n}\right) \right] \: d \gamma(u)} = \int_{X}{\left[F(x + u)-F(u) \right] \: d \gamma_{n}(u)} \, , \]
where $\gamma_{n}$ denotes the Gaussian probability measure defined by \mbox{$\gamma_{n}(A) = \gamma(n A)$} for each Borel subset $A$ of $X$, see \cite[Proposition 6.20]{BenyaminiLindenstrauss00}. Then,  for every $x \in X$
\[ 
\| F_{n}(x) - F(x)\| \leq  \int_{X}\|F\left( x+\frac{u}{n}\right) - F(x) \| \, d \gamma(u) \leq \| F\|_{L} \, \frac{1}{n} \, \int_{X}{\| u\| \: d \gamma(u)}, 
\]
so $F_{n}$ converges to $F$ uniformly on $X$. Using this fact together with $\| F\|_{L} = 1$, there exists a large enough natural number $N$ such that
\[ \| F_{N}\|_{L} >  1- \varepsilon \,. \]
Applying Proposition \ref{Prop:sufConditionWeakGateauxDiff}, we can moreover claim that 
\begin{equation}\label{equa:SeparableEqualityAux1} 
1- \varepsilon < \| F_{N}\|_{L} \leq \| F\|_{L}= 1 \quad , \quad w_{L}(F_{N}) \leq w_{L}(F) \leq n_{L}(X) + \varepsilon 
\end{equation}
and $F_{N}$ is weakly Gateaux differentiable at every point of $X$. Next, let us fix a pair of different points $x_{1}, x_{2} \in X$ such that
\begin{equation}\label{equa:SeparableEqualityAux2}
\| F_{N}(x_{1}) - F_{N}(x_{2})\| > (1-\varepsilon) \| x_{1} - x_{2}\|\,. 
\end{equation}
We can find $u^{\ast} \in S_{X^{\ast}}$ such that
\[ \langle u^{\ast}, F_{N}(x_{1}) - F_{N}(x_{2}) \rangle = \| F_{N}(x_{1}) - F_{N}(x_{2})\|\,.\] 
Then, applying the mean value theorem to the real function 
\[ \mathbb{R} \ni s \mapsto \langle u^{\ast}, F(x_{1} + s(x_{2}-x_{1})) \rangle ,\] we get
\begin{equation}\label{equa:SeparableEqualityAux3}
\| F_{N}(x_{1}) - F_{N}(x_{2})\|  \leq \| x_{1} - x_{2}\| \, \sup_{s \in [0,1]}{\| dF_{N}[x_{1} + s(x_{2} - x_{1})]\|}.
\end{equation}
Thus, combining \eqref{equa:SeparableEqualityAux2} and \eqref{equa:SeparableEqualityAux3} we deduce 
\[ \sup_{s \in [0,1]}{\| dF_{N}[x_{1} + s(x_{2} - x_{1})]\|} > 1-\varepsilon\,.  \]
We can then find $u \in X$ such that the derivative $T:=dF_{N}[u]$ satisfies  \mbox{$\| T\|>1-\varepsilon$}. Moreover, this linear operator satisfies 
\[ w(T) \leq w_{L}(F_{N}) \leq n_{L}(X) +  \varepsilon,\] 
by  \eqref{equa:GateauxNumericalRadius}  and \eqref{equa:SeparableEqualityAux1}. Therefore, we obtain the following upper bound
\[ n(X) \leq \frac{w(T)}{\| T\|} \leq  \frac{n_{L}(X) + \varepsilon}{1 - \varepsilon}. \]
Since $\varepsilon \in (0,1)$ is arbitrary, the result follows.
\end{proof}


\section{Real dual Banach spaces}\label{sec:realDual}

We begin by stating the main result of this section.

\begin{thm}\label{Theo:main1complemented}
For every (real) Banach space $X$ it holds that \mbox{$n_{L}(X^{\ast}) = n(X^{\ast})$}.
 \end{thm}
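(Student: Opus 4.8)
The first reduction is immediate: since the canonical inclusion $\mathcal{L}(X^{\ast}) \hookrightarrow \Lip_{0}(X^{\ast})$ is an isometry under which $W_{L}$ restricts to $W$ on linear maps, one always has $n_{L}(X^{\ast}) \leq n(X^{\ast})$, so it suffices to prove $n(X^{\ast}) \leq n_{L}(X^{\ast})$. Mirroring the endgame of Section \ref{sec:realSeparable}, I would fix $\varepsilon \in (0,1)$ and choose $F \in \Lip_{0}(X^{\ast})$ with $\| F\|_{L} = 1$ and $w_{L}(F) < n_{L}(X^{\ast}) + \varepsilon$, and then try to manufacture a bounded linear operator $T \in \mathcal{L}(X^{\ast})$ with $\| T\| > 1 - \varepsilon$ and $w(T) \leq w_{L}(F)$. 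The characterization $n(X^{\ast}) \| T\| \leq w(T)$ from the introduction would then give $n(X^{\ast}) \leq (n_{L}(X^{\ast}) + \varepsilon)/(1-\varepsilon)$, and the theorem would follow on letting $\varepsilon \to 0$.

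The point is to obtain such a $T$ as a derivative of $F$ at a suitable base point, because Lemma \ref{Lemm:GateauxNumericalRadius} applies with the ambient space taken to be $X^{\ast}$: there the functionals range over $(X^{\ast})^{\ast} = X^{\ast\ast}$, which is exactly the index set appearing in $W_{L}(F)$, so any point $x_{0}$ at which $F$ is weakly Gateaux differentiable would yield $\| dF[x_{0}]\| \leq \| F\|_{L}$ and $w(dF[x_{0}]) \leq w_{L}(F)$ at once. Furthermore, the mean-value-theorem step of the separable proof is purely formal and carries over: starting from a pair $x_{1} \neq x_{2}$ with $\| F(x_{1}) - F(x_{2})\| > (1-\varepsilon)\| x_{1} - x_{2}\|$, one locates a point $u$ on the segment $[x_{1}, x_{2}]$ with $\| dF[u]\| > 1 - \varepsilon$. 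In this way the whole theorem reduces to a single differentiability statement for the dual space: a near-optimal $F$ must be (weakly) Gateaux differentiable at enough points of $X^{\ast}$, with control on the norm of the derivative.

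This is precisely where the dual structure replaces the Gaussian convolution of Section \ref{sec:realSeparable}, and where the main difficulty lies. When $X^{\ast}$ is non-separable there is no measure as in Lemma \ref{Lemm:measureDifferentiable}, so Proposition \ref{Prop:sufConditionWeakGateauxDiff} is unavailable. Instead I would exploit that $X^{\ast}$ is a dual space, so $B_{X^{\ast}}$ is weak$^{\ast}$-compact, together with the amenability of the additive group $(X^{\ast},+)$, and use a translation-invariant mean $\mathfrak{m}$ to linearize $F$. The mechanism is that translation invariance of $\mathfrak{m}$ forces additivity of the resulting assignment $y \mapsto Ty$, while the Lipschitz bound promotes additivity together with continuity to genuine $\mathbb{R}$-linearity; weak$^{\ast}$-compactness of $B_{X^{\ast}}$ is what guarantees that the mean of the uniformly bounded difference quotients defines an honest element of $X^{\ast}$, and not merely of a larger bidual.

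The two delicate points, on which I expect the real work to concentrate, are the following. First, the bound $w(T) \leq w_{L}(F)$ must be read with states in $X^{\ast\ast}$, whereas a mean built from the weak$^{\ast}$ topology naturally controls only pairings with the predual $X$; I would therefore aim to produce $T$ as a weak$^{\ast}$-continuous operator (an adjoint) and invoke the coincidence of the numerical radius computed with predual states, or else push the $X^{\ast\ast}$ estimate through the invariance directly. Second, and more seriously, a global average over base points annihilates the norm — already for a periodic sawtooth function on $\mathbb{R}$ one has $\| F\|_{L} = 1$ while $\mathfrak{m}_{x}[F(x+y) - F(x)] = 0$ for every $y$, so the averaged operator vanishes — which shows the mean cannot be taken globally. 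It must instead be localized around the near-optimal segment produced by the mean-value step, so that the linearizing average still reports a derivative of norm close to $1$. Reconciling this localization with the translation invariance needed for additivity is, I expect, the crux of the proof, and the feature that makes the weak$^{\ast}$ structure of a dual space indispensable.
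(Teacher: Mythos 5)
You have the right endgame (fix $\varepsilon$, produce $T \in \mathcal{L}(X^{\ast})$ with $\|T\| > 1-\varepsilon$ and $w(T) \leq w_{L}(F)$) and you correctly diagnose both obstructions: the numerical radius of a map on $X^{\ast}$ involves states in $X^{\ast\ast}$ while any weak$^{\ast}$-based averaging only controls pairings with $X$, and a globally translation-invariant mean annihilates the norm (your sawtooth example is correct: additivity plus periodicity forces $T(p)=0$, hence $T\equiv 0$). But your proposed resolution --- ``localizing'' the invariant mean near the near-optimal segment --- is precisely what cannot work, and you acknowledge without resolving it: any localization destroys the translation invariance that is the sole source of additivity of $y \mapsto Ty$. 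Since this is, by your own account, the crux, the proposal has a genuine gap at its central step. There is also a secondary gap you do not flag: after restricting to a separable piece, the Gaussian-smoothed derivative lives on that subspace only (an operator $Z \to Z$, not $X^{\ast} \to X^{\ast}$), so even granting differentiability ``at enough points'' one still needs a mechanism to pass from an operator on $Z$ with large norm to an operator on all of $X^{\ast}$ without losing the norm from below or the numerical radius from above.

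The paper's actual route inverts your plan: instead of localizing the mean, it pre-linearizes $F$ on a subspace \emph{before} averaging, so that the mean, far from annihilating the norm, reproduces the map there. Concretely, one builds a separable closed subspace $Z \subset X^{\ast}$ with $F(Z) \subset Z$ and $\|F|_{Z}\|_{L} = 1$ (an iterated-span construction starting from a norm-attaining sequence of pairs), applies the Gaussian convolution of Section \ref{sec:realSeparable} with a measure on $Z$ to get $F_{N}$ whose restriction $G = F_{N}|_{Z}$ is weakly Gateaux differentiable with $\|dG[z_{0}^{\ast}]\| \geq 1-\varepsilon$ at some $z_{0}^{\ast}$, and then extends the difference quotients through an ultrafilter limit to a Lipschitz map $\psi: X^{\ast} \to X^{\ast}$ with $\psi|_{Z} = dG[z_{0}^{\ast}]$ linear. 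The invariant-mean linearization (Proposition \ref{Prop:FromLipschitztoLinearInvariantMean}, applied twice: over $(Z,+)$ and then over $(X^{\ast},+)$, as in Corollary \ref{Coro:FromLipschitztoLinearInvariantMean}) then yields a genuinely linear $T$ with $T|_{Z} = \psi|_{Z}$: since $\psi|_{Z}$ is linear, the functions $z^{\ast} \mapsto \psi(x^{\ast}+z^{\ast})-\psi(z^{\ast})$ are constant on $Z$ for $x^{\ast} \in Z$, so property (M1) forces the mean to fix them, and $\|T\| \geq \|dG[z_{0}^{\ast}]\| \geq 1-\varepsilon$ survives with translation invariance fully intact. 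Your first delicate point is handled not by making $T$ weak$^{\ast}$-continuous (the paper never does this) but by Lemma \ref{Lemm:numericalRadiusDual}: the quantity $v_{\delta}$, built from predual elements $x \in S_{X}$ via Bishop--Phelps--Bollob\'as and Goldstine, computes $w_{L}$ on $X^{\ast}$, and it is this formulation that passes through both the ultrafilter limit and the invariant mean to give $w(T) \leq w_{L}(F)$.
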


To prove this result, our main tool will be the method of linearization of Lipschitz maps based on \emph{invariant means}, as described in \cite{BenyaminiLindenstrauss00}. 

Let $(S,+)$ be an abelian semigroup and denote by $\ell_{\infty}(S,X)$ the Banach space of all bounded functions on $S$ with values in $X$ endowed with the supremum norm. For each $s \in S$,  denote by 
\[ \tau_{s}: \ell_{\infty}(S,X) \to \ell_{\infty}(S,X)\] 
the \emph{translation map} defined for each $f\in \ell_{\infty}(S,X)$ by
\[ \tau_{s}(f)(s') = f(s+s') \quad , \quad s' \in S \,.\]
With this notation, an \emph{$X$-valued invariant mean} on $S$ is a linear operator 
\[ \mu: \ell_{\infty}(S, X) \longrightarrow X\]
satisfying the following properties:
\begin{enumerate}
\item[(M1)] $\mu(\textbf{x}) = x$ if $\textbf{x}$ is the constant function  equal to $x \in X$,
\item[(M2)] $\| \mu \| = 1$,
\item[(M3)] $\mu \circ \tau_{s} = \mu$ for every $s \in S$.
\end{enumerate}

Every abelian semigroup $(S,+)$ has an invariant mean $\mu$ on $S$ with values in $\mathbb{R}$, see e.g. \cite[Theorem C.1]{BenyaminiLindenstrauss00}. From every such $\mu$, we can easily construct an invariant mean on $S$ with values in any dual Banach space $X^{\ast}$. We simply consider the map 
\[ \widehat{\mu}:\ell_{\infty}(S,X^{\ast}) \longrightarrow X^{\ast}\] defined for each \mbox{$f \in \ell_{\infty}(S, X^{\ast})$} by
\begin{equation}\label{equa:inducedAbelianMean} 
\langle \widehat{\mu}(f), x \rangle = \mu \left( \langle f(\cdot), x \rangle \right) \quad , \quad x \in X\,,
\end{equation}
It is easy to check that it is well-defined and satisfies the conditions \mbox{(M1)-(M3)} above using that $\mu$ also does, see \cite[Corollary C.2]{BenyaminiLindenstrauss00}.

We will also need an equivalent formulation for the numerical radius for dual Banach spaces that allows to replace elements of the bidual with elements from the predual. For its proof, we use a strategy inspired on \mbox{\cite[Proof of Lemma 5.4]{Kadets20}}.

\begin{lem}\label{Lemm:numericalRadiusDual}
Let $X$ be a (real) Banach space and let $F \in \Lip_{0}(X^{\ast})$.  If we denote for every $0 < \delta < 1$
 \begin{multline*} 
 v_{\delta}(F):=\sup\left\{ \left|\frac{\langle x, F(x_1^\ast) - F(x_2^\ast) \rangle}{\| x_{1}^\ast - x_{2}^\ast\|}\right| \colon x \in S_{X} \,, \, x_{1}^\ast \neq x_{2}^\ast \in X^\ast \,,\right.\\ \left. \frac{\langle x, x_{1}^\ast - x_{2}^\ast \rangle}{\| x_{1}^\ast - x_{2}^\ast\|} >  1-\delta  \right\}, 
 \end{multline*}
then, the numerical radius of $F$ satisfies
\begin{equation}\label{equa:numericalRadiusDual} 
w_{L}(F)  = \lim_{\delta \rightarrow 0^+}v_{\delta}(F) = \inf_{0<\delta <1} v_{\delta}(F)\,. 
\end{equation}
\end{lem}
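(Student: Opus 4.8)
The plan is to prove the two equalities separately, noting first that the second one, $\lim_{\delta\to 0^+}v_{\delta}(F)=\inf_{0<\delta<1}v_{\delta}(F)$, is essentially free. Indeed, if $0<\delta_1<\delta_2<1$, then the constraint $\frac{\langle x,x_1^\ast-x_2^\ast\rangle}{\|x_1^\ast-x_2^\ast\|}>1-\delta_1$ is more restrictive than the analogous one with $\delta_2$, so the feasible set defining $v_{\delta_1}(F)$ is contained in that defining $v_{\delta_2}(F)$, whence $v_{\delta_1}(F)\le v_{\delta_2}(F)$. Thus $\delta\mapsto v_{\delta}(F)$ is nondecreasing and its limit as $\delta\to 0^+$ coincides with its infimum. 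The real content is therefore the identity $w_{L}(F)=\inf_{0<\delta<1}v_{\delta}(F)$, which I would establish through two opposite inequalities.

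For the inequality $w_{L}(F)\le v_{\delta}(F)$ (for every fixed $\delta$), I would start from an arbitrary element of $W_{L}(F)$, i.e. a bidual functional $\phi\in S_{X^{\ast\ast}}$ and points $x_1^\ast\neq x_2^\ast$ with $\frac{\langle\phi,x_1^\ast-x_2^\ast\rangle}{\|x_1^\ast-x_2^\ast\|}=1$, and approximate $\phi$ from the predual. By Goldstine's theorem $B_{X}$ is weak$^\ast$-dense in $B_{X^{\ast\ast}}$, so testing against the two fixed functionals $x_1^\ast-x_2^\ast$ and $F(x_1^\ast)-F(x_2^\ast)$ I can find $x\in B_{X}$ with $\langle x,x_1^\ast-x_2^\ast\rangle$ arbitrarily close to $\|x_1^\ast-x_2^\ast\|$ and $\langle x,F(x_1^\ast)-F(x_2^\ast)\rangle$ arbitrarily close to $\langle\phi,F(x_1^\ast)-F(x_2^\ast)\rangle$. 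After normalizing $x$ to the unit sphere (legitimate, since $\langle x,x_1^\ast-x_2^\ast\rangle$ near the norm forces $\|x\|$ near $1$), the triple $(x,x_1^\ast,x_2^\ast)$ becomes admissible for $v_{\delta}(F)$ and reproduces the chosen value up to an arbitrarily small error; taking suprema gives $w_{L}(F)\le v_{\delta}(F)$, hence $w_{L}(F)\le\inf_{\delta}v_{\delta}(F)$.

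For the reverse inequality I would prove the quantitative bound $v_{\delta}(F)\le w_{L}(F)+2\sqrt{2\delta}\,\|F\|_{L}$. Given an admissible triple $(x,x_1^\ast,x_2^\ast)$ for $v_{\delta}(F)$, set $y^\ast=\frac{x_1^\ast-x_2^\ast}{\|x_1^\ast-x_2^\ast\|}\in S_{X^\ast}$, so that $\langle x,y^\ast\rangle>1-\delta$. The key step is to turn this almost-norming pair into an exactly norming one via the Bishop--Phelps--Bollob\'as theorem, which holds in every Banach space: I obtain $\tilde x\in S_{X}$ and $\tilde y^\ast\in S_{X^\ast}$ with $\langle\tilde x,\tilde y^\ast\rangle=1$ and $\|\tilde x-x\|,\|\tilde y^\ast-y^\ast\|<\sqrt{2\delta}$. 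I then rebuild the points, keeping $\tilde x_2^\ast=x_2^\ast$ and setting $\tilde x_1^\ast=x_2^\ast+\|x_1^\ast-x_2^\ast\|\,\tilde y^\ast$, so that $\tilde x_1^\ast-\tilde x_2^\ast=\|x_1^\ast-x_2^\ast\|\,\tilde y^\ast$ has exactly the same norm as $x_1^\ast-x_2^\ast$ and the triple $(\tilde x,\tilde x_1^\ast,\tilde x_2^\ast)$ is admissible for $w_{L}(F)$ with exact pairing equal to $1$. Its value then has modulus at most $w_{L}(F)$, and since $\|\tilde x_1^\ast-x_1^\ast\|=\|x_1^\ast-x_2^\ast\|\,\|\tilde y^\ast-y^\ast\|$, the Lipschitz bound on $F$ controls the change of the numerator, so the original and new values differ by at most $2\sqrt{2\delta}\,\|F\|_{L}$. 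Letting $\delta\to 0^+$ yields $\inf_{\delta}v_{\delta}(F)\le w_{L}(F)$, closing the chain.

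The main obstacle is precisely this last step: replacing the approximate norming condition $\langle x,y^\ast\rangle>1-\delta$ by the exact one $\langle\cdot,\cdot\rangle=1$ without losing control of $F(x_1^\ast)-F(x_2^\ast)$. The naive idea of moving only the functional (freezing the points) is too rigid, since the set of values attainable at exact pairing need not be close to the value attained at the perturbed pairing, and there is no uniform modulus as the direction $y^\ast$ varies. The device that resolves this is to absorb the Bishop--Phelps--Bollob\'as perturbation of $y^\ast$ into a perturbation of $x_1^\ast$ (leaving $x_2^\ast$ and the difference-norm unchanged), after which Lipschitz continuity converts the $\sqrt{2\delta}$-smallness of $\|\tilde y^\ast-y^\ast\|$ directly into smallness of the value. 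I would only need to check that the $v_{\delta}$-constraint carries no absolute value, so that $\langle x,y^\ast\rangle$ is genuinely near $+1$ and the theorem applies as stated.
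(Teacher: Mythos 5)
Your proposal is correct and follows essentially the same route as the paper's proof: monotonicity of $\delta \mapsto v_{\delta}(F)$ for the second equality, Goldstine's theorem (with the normalization to $S_{X}$) for $w_{L}(F) \leq \inf_{\delta} v_{\delta}(F)$, and the Bishop--Phelps--Bollob\'as theorem for the reverse, including the paper's exact key device of absorbing the perturbation of the direction into $x_{1}^{\ast}$ while freezing $x_{2}^{\ast}$ and the difference-norm (the paper sets $y_{1}^{\ast} = x_{2}^{\ast} + u^{\ast}\|x_{1}^{\ast}-x_{2}^{\ast}\|$, $y_{2}^{\ast}=x_{2}^{\ast}$, just as you do), arriving at the same $O(\sqrt{\delta})\,\|F\|_{L}$ error bound. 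The only cosmetic deviation is that you apply Bishop--Phelps--Bollob\'as in the pairing $(X, X^{\ast})$ so the perturbed functional $\tilde{x}$ stays in $S_{X} \subset S_{X^{\ast\ast}}$, whereas the paper applies it with ground space $X^{\ast}$ and lets the perturbed element $x^{\ast\ast}$ live in $S_{X^{\ast\ast}}$; both pairs are admissible for $w_{L}(F)$, so the arguments coincide up to constants in the error term.
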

\begin{proof}
The right-hand side equality of \eqref{equa:numericalRadiusDual} holds since $\delta \mapsto v_{\delta}(F)$ is nondecreasing. To prove the left-hand side equality, we first show that 
\begin{equation}\label{equa:numericalRadiusDualAux0}
w_{L}(F) \geq \lim_{\delta \rightarrow 0^+}v_{\delta}(F).
\end{equation}
We use the canonical identification $X \hookrightarrow X^{\ast \ast}$. Fix $0 < \delta < 1/2$ and elements \mbox{$x \in S_{X} \subset S_{X^{\ast \ast}}$} and \mbox{$x_{1}^\ast \neq x_{2}^\ast \in X^\ast$} with 
\[ \frac{\langle x, x_{1}^\ast - x_{1}^\ast \rangle}{\| x_{1}^\ast - x_{2}^\ast\|} >  1-\delta  \,. \]
 By Bishop-Phelps-Bollobás theorem \cite{Bollobas70}, there exists $x^{\ast \ast}$ and a vector $u^{\ast}$ such that
\begin{equation}\label{equa:numericalRadiusDualAux1}
\| x^{\ast \ast} - x\| \leq \sqrt{2 \delta} \, , \quad \left\| \frac{x_{1}^\ast - x_{1}^\ast }{\| x_{1}^\ast - x_{2}^\ast\|}- u^{\ast}\right\| \leq \sqrt{2 \delta} + 2\delta \, , \quad \langle x^{\ast \ast} , u^{\ast} \rangle =1\,. 
\end{equation}
Let us define $y_{1}^\ast:=x_{2}^\ast + u^\ast \|x_{1}^\ast - x_{2}^\ast \|$ and $y_{2}^\ast := x_{2}^\ast$.  Observe that $y_{1}^\ast \neq y_{2}^\ast$ and
\begin{equation}\label{equa:numericalRadiusDualAux1.5}
\| y_{1}^\ast - y_{2}^\ast\| = \| u^\ast\| \cdot \| x_{1}^\ast - x_{2}^\ast\| = \| x_{1}^\ast - x_{2}^\ast\|.
\end{equation}
Moreover, we have
\[ \langle x^{\ast \ast}, y_{1}^\ast - y_{2}^\ast \rangle = \langle x^{\ast \ast}, u^{\ast}  \rangle  \| x_{1}^\ast - x_{2}^\ast\| = \| x_{1}^\ast - x_{2}^\ast\| = \| y_{1}^\ast - y_{2}^\ast\|\,, \]
Thus, by the definition of Lipschitz numerical radius
\begin{equation}\label{equa:numericalRadiusDualAux2} 
w_{L}(F) \geq  \frac{\langle x^{\ast \ast}, F(y_{1}^\ast) - F(y_{2}^\ast) \rangle}{\| y_{1}^\ast - y_{2}^\ast\|}  \,. 
\end{equation}
On the other hand, we can estimate using \eqref{equa:numericalRadiusDualAux1.5}
\begin{align*}
& \left| \frac{\langle x^{\ast \ast}, F(y_{1}^\ast) - F(y_{2}^\ast) \rangle}{\| y_{1}^\ast - y_{2}^\ast\|} - \frac{\langle x, F(x_{1}^\ast) - F(x_{2}^\ast) \rangle}{\| x_{1}^\ast - x_{2}^\ast\|} \right| \\[2mm]
& \quad \quad \quad = \left| \frac{\langle x^{\ast \ast}, F(y_{1}^\ast) - F(y_{2}^\ast) \rangle - \langle x, F(x_{1}^\ast) - F(x_{2}^\ast) \rangle}{\| x_{1}^\ast - x_{2}^\ast\|}  \right|\\[2mm]
& \quad \quad \quad \leq \left| \frac{\langle x^{\ast \ast} - x, F(y_{1}^\ast) - F(y_{2}^\ast) \rangle }{\| x_{1}^\ast - x_{2}^\ast\|}  \right| \\[2mm]
&  \quad \quad \quad \quad \quad +\left| \frac{\langle x, F(y_{1}^\ast) - F(y_{2}^\ast) - (F(x_{1}^\ast) - F(x_{2}^\ast)) \rangle}{\| x_{1}^\ast - x_{2}^\ast\|}  \right|\\[2mm]
& \quad \quad \quad \leq \| x-x^{\ast \ast} \| \, \| F\|_{L}  \, \frac{\| y_{1}^\ast - y_{2}^\ast\|}{\| x_{1}^\ast - x_{2}^\ast\|}  + 
\left| \frac{\langle x, F(y_{1}^\ast) - F(x_{1}^\ast)  \rangle}{\| x_{1}^\ast - x_{2}^\ast\|}  \right|\\[2mm]
& \quad \quad \quad \leq \| F\|_{L}  \,\| x-x^{\ast \ast}\| +  \| F\|_{L} \,  \frac{\| x_{1}^\ast - y_{1}^\ast \|}{\|x_{1}^{\ast} - x_{2}^\ast \|}\\[2mm]
&\quad \quad \quad  = \| F\|_{L}  \,\| x-x^{\ast \ast}\| +  \| F\|_{L} \, \left\|  \frac{x_{1}^\ast - x_{2}^\ast}{\|x_{1}^{\ast} - x_{2}^\ast \|} - u^\ast \right\| \\[2mm]
& \quad \quad \quad \leq \| F\|_{L} (2\sqrt{2\delta} + 2\delta)\,.
\end{align*}
Combining the last inequality with \eqref{equa:numericalRadiusDualAux2}, we get that
\[ \left| \frac{\langle x, F(x_{1}^\ast) - F(x_{2}^\ast) \rangle}{\| x_{1}^\ast - x_{2}^\ast\|} \right| \leq \| F\|_{L} (2 \sqrt{2 \delta} + 2\delta) + w_{L}(F)\,, \]
and thus, taking supremum over $x$, $x_{1}^\ast$ and $x_{2}^\ast$ under the above conditions,
\[ v_{\delta}(F) \leq \| F\|_{L} (2 \sqrt{2 \delta} + 2\delta) + w_{L}(F)\,.  \]
Taking limit when $\delta \to 0^+$, we conclude that \eqref{equa:numericalRadiusDualAux0} holds.

The reverse inequality is easier. If we take $x^{\ast \ast} \in S_{X^{\ast \ast}}$ and vectors $x_{1}^\ast \neq x_{2}^\ast$ in $X^\ast$ with 
\[ \frac{\langle x^{\ast \ast}  , x_{1}^\ast - x_{2}^\ast\rangle}{\| x_{1}^\ast - x_{2}^\ast\|}=1,  \]
then by Goldstine's theorem, we can find for every $0<\delta <1$ an element $x \in S_{X}$ with
\[ \frac{\langle x  , x_{1}^\ast - x_{2}^\ast\rangle}{\| x_{1}^\ast - x_{2}^\ast\|} \geq 1-\delta  \quad , \quad \left|\frac{\langle x, F(x_{1}^\ast) -  F(x_{2}^\ast)\rangle}{\| x_{1}^\ast - x_{2}^\ast\|} - \frac{\langle x^{\ast \ast}, F(x_{1}^\ast) -  F(x_{2}^\ast)\rangle}{\| x_{1}^\ast - x_{2}^\ast\|}\right| < \delta\,. \]
Therefore
\[ 
\left|\frac{\langle x^{\ast \ast}, F(x_{1}^\ast) -  F(x_{2}^\ast)\rangle}{\| x_{1}^\ast - x_{2}^\ast\|}\right| \leq \delta + \left|\frac{\langle x, F(x_{1}^\ast) -  F(x_{2}^\ast)\rangle}{\| x_{1}^\ast - x_{2}^\ast\|}\right| \leq \delta + v_{\delta}(F)\,.  
\]
From this expression, taking supremum over $x^{\ast \ast}, x_{1}^\ast, x_{2}^\ast$ with the above conditions, we deduce that
\[ w_{L}(F) \leq \delta + v_{\delta}(F)\,. \]
Finally, taking again limit when $\delta \rightarrow 0^{+}$ we conclude 
\[ w_{L}(F) \leq \lim_{\delta \rightarrow 0^+}  v_{\delta}(F)\,. \]
This finishes the proof.
\end{proof}

Next, we present two results that are largely based on \cite[Theorem 7.2]{BenyaminiLindenstrauss00} and its proof. These results employ invariant means to linearize Lipschitz functions on dual Banach spaces, and we also need to include details concerning the relationship between the numerical radius of the original function and that of the resulting map.

\begin{prop}\label{Prop:FromLipschitztoLinearInvariantMean}
Let $X$ be a real Banach space and let $F \in \Lip_{0}(X^{\ast})$. For every $x^{\ast} \in X^{\ast}$ consider the map $F_{x^\ast}:X^\ast \to X^\ast$ given by
\[ F_{x^\ast}(y^{\ast}) = F(x^{\ast} + y^\ast) - F(y^\ast) \quad , \quad y^{\ast} \in X^{\ast}. \]
Then, this map is bounded. Moreover, let $Z$ be a vector subspace of $X^{\ast}$ and denote the restriction of $F_{x^\ast}$ to $Z$ also by $F_{x^\ast}$. Fix a scalar invariant mean $\mu$  on the abelian group $(Z,+)$, and let $\widehat{\mu}:\ell_{\infty}(Z,X^{\ast}) \to X^{\ast}$ be the associated invariant mean with values in $X^{\ast}$ defined in \eqref{equa:inducedAbelianMean}.  Then, the map \mbox{$T: X^{\ast} \longrightarrow X^{\ast}$} given by
\[ T(x^{\ast}) = \widehat{\mu}\left(  F_{x^\ast}  \right)\,,\quad x^{\ast} \in X^{\ast} \]
satisfies: 

\begin{enumerate}
\item[($i$)] $T$ is Lipschitz with $T(0)=0$ and $\| T\|_{L} \leq \| F\|_{L}$,

\item[($ii$)] $T(x^{\ast} + z^{\ast}) = T(x^{\ast}) + T(z^{\ast})$ for every $x^{\ast} \in X^{\ast}$ and $z^{\ast} \in Z$,

\item[($iii$)] $w_{L}(T) \leq w_{L}(F)$.
\end{enumerate}
\end{prop}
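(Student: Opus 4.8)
The plan is to verify the four assertions one at a time, using in turn the linearity, the norm‑one property (M2), and the translation‑invariance (M3) of $\widehat{\mu}$, and reserving Lemma \ref{Lemm:numericalRadiusDual} for the numerical‑radius estimate in ($iii$). The boundedness of $F_{x^\ast}$ is immediate from the Lipschitz condition: for every $y^\ast$,
\[ \| F_{x^\ast}(y^\ast)\| = \| F(x^\ast + y^\ast) - F(y^\ast)\| \le \| F\|_{L}\, \| x^\ast\|, \]
so the restriction of $F_{x^\ast}$ to $Z$ lies in $\ell_{\infty}(Z,X^\ast)$ and $T(x^\ast) = \widehat{\mu}(F_{x^\ast})$ is well defined. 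For ($i$), note first that $F_{0} \equiv 0$, whence $T(0) = \widehat{\mu}(0) = 0$ by linearity. For the Lipschitz bound I would compute, for $y^\ast \in Z$, the identity $(F_{x_1^\ast} - F_{x_2^\ast})(y^\ast) = F(x_1^\ast + y^\ast) - F(x_2^\ast + y^\ast)$, whose norm is at most $\| F\|_{L}\| x_1^\ast - x_2^\ast\|$; hence $\| F_{x_1^\ast} - F_{x_2^\ast}\|_{\infty} \le \| F\|_{L}\| x_1^\ast - x_2^\ast\|$, and applying linearity together with $\|\widehat{\mu}\| = 1$ gives $\| T(x_1^\ast) - T(x_2^\ast)\| \le \| F\|_{L}\| x_1^\ast - x_2^\ast\|$, i.e. $\| T\|_{L} \le \| F\|_{L}$.

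For ($ii$) the heart of the matter is the algebraic identity, valid for $x^\ast \in X^\ast$, $z^\ast \in Z$ and $y^\ast \in Z$,
\[ F_{x^\ast + z^\ast}(y^\ast) = F_{x^\ast}(z^\ast + y^\ast) + F_{z^\ast}(y^\ast) = (\tau_{z^\ast} F_{x^\ast})(y^\ast) + F_{z^\ast}(y^\ast), \]
where I use that $z^\ast + y^\ast \in Z$ (so that the translation stays inside $\ell_{\infty}(Z,X^\ast)$; here $Z$ being a subspace is essential). Thus $F_{x^\ast + z^\ast} = \tau_{z^\ast}(F_{x^\ast}) + F_{z^\ast}$ as elements of $\ell_{\infty}(Z,X^\ast)$, and applying $\widehat{\mu}$ with its linearity and invariance (M3), namely $\widehat{\mu}\circ\tau_{z^\ast} = \widehat{\mu}$, yields $T(x^\ast + z^\ast) = T(x^\ast) + T(z^\ast)$.

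For ($iii$) I would first invoke Lemma \ref{Lemm:numericalRadiusDual}, which reduces the goal to bounding $v_{\delta}(T)$ for each $0 < \delta < 1$. Fix $x \in S_{X}$ and $x_1^\ast \ne x_2^\ast$ with $\langle x, x_1^\ast - x_2^\ast\rangle / \| x_1^\ast - x_2^\ast\| > 1-\delta$. Using linearity of $\widehat{\mu}$ and its defining property \eqref{equa:inducedAbelianMean},
\[ \langle x, T(x_1^\ast) - T(x_2^\ast)\rangle = \mu(h), \qquad h(y^\ast) := \langle x, F(x_1^\ast + y^\ast) - F(x_2^\ast + y^\ast)\rangle. \]
The crucial observation is that for each $y^\ast \in Z$ the pair $(x_1^\ast + y^\ast,\, x_2^\ast + y^\ast)$ has the same difference $x_1^\ast - x_2^\ast$, so the test element $x$ still satisfies the inequality $> 1-\delta$; by definition of $v_{\delta}(F)$ this forces $|h(y^\ast)| / \| x_1^\ast - x_2^\ast\| \le v_{\delta}(F)$ pointwise. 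Since $\|\mu\| = 1$, this estimate passes to $|\mu(h)| / \| x_1^\ast - x_2^\ast\| \le v_{\delta}(F)$, so taking the supremum over admissible $x, x_1^\ast, x_2^\ast$ gives $v_{\delta}(T) \le v_{\delta}(F)$; letting $\delta \to 0^+$ and applying Lemma \ref{Lemm:numericalRadiusDual} on both sides yields $w_{L}(T) \le w_{L}(F)$.

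The main obstacle is precisely this last step. The mean $\widehat{\mu}$ interacts with $X^\ast$ only through pairings with elements of $X$, not with the full bidual $X^{\ast\ast}$, so one cannot test the increments $T(x_1^\ast) - T(x_2^\ast)$ directly against the norm‑attaining bidual functionals that define $w_{L}$. Lemma \ref{Lemm:numericalRadiusDual} is exactly the device that replaces those $X^{\ast\ast}$ functionals by approximately norm‑attaining elements of $X$, making the pointwise comparison of $h$ with $v_{\delta}(F)$ legitimate; everything else is routine bookkeeping with the mean axioms.
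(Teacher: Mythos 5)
Your proof is correct and takes essentially the same approach as the paper's: the identical sup-norm estimate via $\|\widehat{\mu}\|=1$ for ($i$), the same translation identity $F_{x^\ast+z^\ast}-F_{z^\ast}=\tau_{z^\ast}(F_{x^\ast})$ (which you state in an equivalent additive form) combined with (M3) for ($ii$), and the same reduction of ($iii$) to the pointwise bound $v_{\delta}(T)\leq v_{\delta}(F)$ via Lemma \ref{Lemm:numericalRadiusDual}, exploiting that translating both $x_{1}^\ast$ and $x_{2}^\ast$ by $z^\ast\in Z$ preserves the $(1-\delta)$-constraint. Your closing observation about why the lemma is indispensable --- the mean $\widehat{\mu}$ only pairs with elements of $X$, not of $X^{\ast\ast}$ --- is precisely the paper's motivation for that lemma.
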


\begin{proof}
The fact that $F_{x^\ast}$ is bounded for every $x^{\ast} \in X^\ast$ is clear:
\[ \sup_{y^{\ast} \in X^\ast}\| F_{x^\ast}(y^\ast)\| = \sup_{y^{\ast} \in X^\ast}\| F(x^\ast + y^\ast) - F(y^\ast)\| \leq \| F\|_{L} \, \| x^\ast\|\,. \]
Next, let us check ($i$). Since $\widehat{\mu}$ is linear, for every $x^{\ast}, y^{\ast} \in X^{\ast}$ we have
\[  T(x^{\ast})-T(y^{\ast})  = \widehat{\mu}\left( F_{x^\ast} - F_{y^\ast} \right) \,.\]
Using that $\|\widehat{\mu} \|=1$ by (M2), we can estimate
\begin{align*} 
\| T(x^{\ast}) - T(y^{\ast})\|
& \leq \| F_{x^\ast} - F_{y^{\ast}} \|_{\infty}\\[2mm] 
&  = \sup_{z^{\ast} \in Z}\| F_{x^\ast}(z^{\ast}) - F_{y^{\ast}}(z^{\ast})  \|\\[2mm]
&  = \sup_{z^{\ast} \in Z}\| F(x^\ast + z^{\ast}) -  F(y^\ast + z^{\ast})  \|\\[2mm]
& \leq \| F\|_{L} \, \| x^{\ast} - y^{\ast}\|\,. 
\end{align*}
This yields that $T$ is Lipschitz with $\|T\|_{L} \leq \| F\|_{L}$. The fact that $T(0)=0$ is obvious.

 To see property ($ii$), let us fix $x^\ast \in X^\ast$ and $z^\ast \in Z$. Notice that for every $y^\ast \in Z$
\begin{align*} 
F_{x^\ast+z^\ast}(y^\ast) - F_{z^\ast}(y^\ast)
& = F(x^\ast+z^\ast + y^\ast) - F(z^\ast + y^\ast)\\[2mm] 
& = F_{x^\ast}(y^\ast + z^\ast)\\[2mm]
& =\tau_{z^\ast}(F_{x^{\ast}})(y^\ast) \,,
\end{align*}
where $\tau_{z^\ast}$ is the translation map on $\ell_{\infty}(Z,X)$. This identity can be rewritten as
\[ F_{x^\ast+z^\ast} - F_{z^\ast} = \tau_{z^\ast}(F_{x^{\ast}}). \]
Applying then $\widehat{\mu}$ on this equality, and using (M3), we get
\begin{align*}
T(x^{\ast}+z^{\ast})-T(z^{\ast})  
 & =\widehat{\mu}\left( F_{x^\ast+z^\ast} - F_{z^\ast} \right)  \\[2mm]
 & = \widehat{\mu}\left(\tau_{z^\ast}(F_{x^\ast})\right)     \\[2mm]
 & = \widehat{\mu}\left(F_{x^\ast}\right)  \\[2mm]
 & = T(x^{\ast})\,.
\end{align*}
This proves the second item.

Finally, let us check ($iii$). Following Lemma \ref{Lemm:numericalRadiusDual}, it is enough to prove that for every $0 < \delta <1$ it holds that $v_{\delta}(T) \leq v_{\delta}(F)$. To see this, let us take $x \in S_{X}$ and $x_{1}^\ast \neq x_{2}^\ast$ in $X^{\ast}$ satisfying
\[ \frac{\langle x, x_{1}^\ast - x_{2}^\ast\rangle}{\| x_{1}^\ast - x_{2}^\ast\|} > 1- \delta \,.\]
Observe that the same inequality holds if we replace $x_{1}^\ast$ and $x_{2}^\ast$ with $x_{1}^\ast +z^\ast$ and $x_{2}^\ast +z^\ast$, respectively, for any $z^\ast \in Z$. Then, we can estimate
\begin{align*} 
\frac{|\langle x, T(x_{1}^{\ast}) - T(x_{2}^{\ast})\rangle|}{\| x_{1}^\ast - x_{2}^\ast\|} 
& =  \frac{|\langle x, \widehat{\mu}\left(F_{x_1^\ast} - F_{x_2^\ast}\right)\rangle|}{\| x_{1}^\ast - x_{2}^\ast\|} \\[2mm]
& = \frac{|\mu\big(  \langle x,  F_{x_{1}^{\ast}} -F_{x_{2}^{\ast}} \rangle \big)|}{\| x_{1}^\ast - x_{2}^\ast\|}\\[2mm]
& \leq \frac{\| \mu \| \sup_{z^{\ast} \in Z} |\langle x,  F_{x_{1}^{\ast}}(z^\ast) -F_{x_{2}^{\ast}}(z^\ast) \rangle|}{\| x_{1}^\ast - x_{2}^\ast\|}\\[2mm]
& =  \sup_{z^{\ast} \in Z} \frac{|\langle x,  F(x_{1}^{\ast}+z^{\ast}) -F(x_{2}^{\ast}+z^{\ast}) \rangle|}{\| (x_{1}^\ast+z^\ast) - (x_{2}^\ast+z^\ast)\|}\\[2mm]
& \leq v_{\delta}(F)\,.
\end{align*}
Taking then supremum over all $x$ and $x_{1}^\ast, x_{2}^\ast$  with the above condition, we conclude that $v_{\delta}(T) \leq v_{\delta}(F)$, finishing the proof.
\end{proof}

The following result follows the ideas from \cite[Theorem 7.2]{BenyaminiLindenstrauss00}.
 
\begin{cor}\label{Coro:FromLipschitztoLinearInvariantMean}
Let $X$ be a Banach space and let $Z$ be a subspace of $X^{\ast}$. Then, for each $F \in \Lip_{0}(X^{\ast})$ with $F|_{Z}:Z \longrightarrow X^{\ast}$ being linear, there exists a bounded linear operator $T:X^{\ast} \longrightarrow X^{\ast}$ such that
\[ T|_{Z} = F|_{Z}\,, \quad \| T\| \leq \| F\|_{L} \quad \mbox{and} \quad w(T) \leq w_{L}(F). \]
\end{cor}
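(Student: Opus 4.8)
The plan is to invoke Proposition \ref{Prop:FromLipschitztoLinearInvariantMean} \emph{twice}: first with the given subspace $Z$, and then with the whole space $X^{\ast}$. A single application will not do, since taking the averaging group to be $Z$ yields a map that agrees with $F$ on $Z$ but is only additive in the $Z$-directions, whereas taking the group to be $X^{\ast}$ yields a fully linear map with no control on its restriction to $Z$. The two-step scheme should succeed precisely because the intermediate map carries property ($ii$), which is strictly stronger than linearity of its restriction to $Z$. Throughout I use that scalar invariant means exist on every abelian group by \cite[Theorem C.1]{BenyaminiLindenstrauss00}, together with the induced $X^{\ast}$-valued means $\widehat{\mu}$ of \eqref{equa:inducedAbelianMean}.

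\emph{First application.} I would apply Proposition \ref{Prop:FromLipschitztoLinearInvariantMean} to $F$ with the subspace $Z$, producing $T_{1} \in \Lip_{0}(X^{\ast})$ obeying ($i$)--($iii$). Since $F|_{Z}$ is linear, for each $x^{\ast} \in Z$ the function $z^{\ast} \mapsto F_{x^{\ast}}(z^{\ast}) = F(x^{\ast}+z^{\ast}) - F(z^{\ast}) = F(x^{\ast})$ is constant on $Z$, so (M1) gives $T_{1}(x^{\ast}) = \widehat{\mu}(F_{x^{\ast}}) = F(x^{\ast})$; that is, $T_{1}|_{Z} = F|_{Z}$. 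Crucially, property ($ii$) records that $T_{1}(y^{\ast}+z^{\ast}) = T_{1}(y^{\ast}) + T_{1}(z^{\ast})$ for \emph{every} $y^{\ast} \in X^{\ast}$ and every $z^{\ast} \in Z$, not merely for $y^{\ast} \in Z$.

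\emph{Second application.} Next I would apply the Proposition to $T_{1}$, this time with the subspace $X^{\ast}$ itself, obtaining $T \in \Lip_{0}(X^{\ast})$. By ($ii$) for this second application (now with group $X^{\ast}$), $T$ is additive on all of $X^{\ast}$; since $T$ is also Lipschitz, additivity upgrades to $\mathbb{Q}$-homogeneity and then, by continuity, to full $\mathbb{R}$-linearity. To see that the restriction to $Z$ survives, fix $x^{\ast} \in Z$: property ($ii$) of $T_{1}$ forces $y^{\ast} \mapsto T_{1}(x^{\ast}+y^{\ast}) - T_{1}(y^{\ast}) = T_{1}(x^{\ast})$ to be constant on $X^{\ast}$, so averaging again gives $T(x^{\ast}) = T_{1}(x^{\ast}) = F(x^{\ast})$, i.e. $T|_{Z} = F|_{Z}$. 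Finally, chaining ($i$) and ($iii$) through both applications yields $\|T\|_{L} \leq \|T_{1}\|_{L} \leq \|F\|_{L}$ and $w_{L}(T) \leq w_{L}(T_{1}) \leq w_{L}(F)$; because $T$ is linear, $\|T\| = \|T\|_{L}$ and $w(T) = w_{L}(T)$, which is exactly the claim.

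The main obstacle is conceptual rather than computational: one must see why a single averaging cannot simultaneously linearize and fix the values on $Z$, and why property ($ii$) --- additivity over $Z$ with an \emph{arbitrary} base point, rather than mere linearity of $T_{1}|_{Z}$ --- is precisely the hypothesis that makes the relevant difference functions constant in the second step, so that the second mean preserves $T_{1}$ on $Z$. The only other point requiring care is the appeal to Lipschitz continuity to pass from additivity on $X^{\ast}$ to genuine $\mathbb{R}$-linearity.
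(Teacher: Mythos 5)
Your proposal is correct and follows essentially the same two-step averaging argument as the paper: your $T_{1}$ is exactly the paper's intermediate map $G$, and the paper likewise exploits property ($ii$) of Proposition \ref{Prop:FromLipschitztoLinearInvariantMean} to make the difference functions $G_{z^{\ast}}$ constant on all of $X^{\ast}$, so that the second mean (over the group $X^{\ast}$) fixes the values on $Z$ via (M1). The only cosmetic difference is that you spell out the passage from additivity plus Lipschitz continuity to $\mathbb{R}$-linearity through $\mathbb{Q}$-homogeneity, a step the paper states without detail.
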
 
\begin{proof}
Let us fix a real-valued invariant mean $\mu$ on $(Z,+)$, and let  $\widehat{\mu}$ be the associated $X^\ast$-valued invariant mean. Following Proposition \ref{Prop:FromLipschitztoLinearInvariantMean}, let us define from $\widehat{\mu}_{Z}$ and $F$ a new map  \mbox{$G: X^{\ast} \longrightarrow X^{\ast}$} by
\[ G(x^{\ast}) = \widehat{\mu}\left(  F_{x^\ast}  \right)\,,\quad x^{\ast} \in X^{\ast}\,. \]
By the aforementioned proposition, $G$ is a Lipschitz map satisfying $G(0)=0$, $\| G\|_{L} \leq \| F\|_{L}$, $w_{L}(G) \leq w_{L}(F)$ and
\begin{equation}\label{equa:GquasiLinear} 
G(x^{\ast} + z^{\ast}) = G(x^\ast) + G(z^\ast) \quad \mbox{for every }x^{\ast} \in X^\ast , z^\ast \in Z\,. 
\end{equation}
Moreover, for every $z^\ast \in Z$, since $F|_{Z}$ is linear, it holds that $F_{z^\ast}$ is a constant function on $Z$ equal to $F(z^\ast)$. Thus, property (M1) yields that 
\[ G(z^\ast) = \widehat{\mu}(F_{z^\ast}) = F(z^\ast) \quad \mbox{for every } z^\ast \in Z\,, \]
or equivalently, $G|_{Z} = F|_{Z}$. However, it is not clear that $G$ is linear on $X^\ast$. To achieve this property, we need to iterate the same argument once more.

Let us then fix a real-valued invariant mean $\nu$ on $(X^\ast,+)$, and consider its associated $X^\ast$-valued invariant mean $\widehat{\nu}$. We follow again Proposition \ref{Prop:FromLipschitztoLinearInvariantMean} to define from $\widehat{\nu}$ and $G$ a new map \mbox{$T:X^{\ast} \longrightarrow X^{\ast}$} given by
\[ T(x^{\ast}) =\widehat{\nu}(G_{x^\ast}) \quad , \quad x^{\ast} \in X^{\ast}. \]
By the aforementioned proposition, this is again a Lipschitz map satisfying 
\[ \| T\|_{L} \leq \|G\|_{L} \leq \| F\|_{L} \quad , \quad w_{L}(T) \leq w_{L}(G) \leq w_{L}(F) \]
and moreover
\[ T(x^{\ast} + y^{\ast}) = T(x^\ast) + T(y^\ast) \quad \mbox{for every }x^{\ast} \in X^\ast , y^\ast \in X^\ast\,.\]
This last property, together with the fact that is Lipschitz, yields that $T$ is actually a (bounded) linear operator, which thus satisfies $\| T\| \leq \| F\|_{L}$ and $w(T) \leq w_{L}(F)$.

Moreover, using \eqref{equa:GquasiLinear}, we have that for every $z^\ast \in Z$ the map $G_{z^\ast}$ is a constant function equal to $G(z^\ast)$. Hence, property (M1) yields that 
\[ T(z^\ast) = \widehat{\nu}(G_{z^\ast}) = G(z^\ast) \quad \mbox{for every } z^\ast \in Z\,. \]
This implies that $T|_{Z} = G|_{Z} = F|_{Z}$, and the result follows.
\end{proof}

We can now finish the section by proving the main result.

\begin{proof}[Proof of Theorem \ref{Theo:main1complemented}]
We only need to show that $n(X^\ast) \leq n_{L}(X^\ast)$. For that, let us fix $\varepsilon \in (0,1)$ and take $F \in \Lip_{0}(X^{\ast})$ such that $\| F\|_{L} = 1$ and $w_{L}(F) < n_{L}(X) + \varepsilon$. 

We claim that there exists a separable subspace $Z \subset X^{\ast}$ such that $F(Z) \subset Z$ and the Lipschitz norm of its restriction  $\| F|_{Z}\|_{L} = 1$. Indeed, by the definition of the Lipschitz norm, we can find two sequences $(x_{n})_{n}$ and $(y_{n})_{n}$ such that $x_{n} \neq y_{n}$ and
\[ \frac{\| F(x_{n}) - F(y_{n})\|}{\| x_{n} - y_{n}\|} \geq 1 - \frac{1}{n} \quad \mbox{ for every } n \in \mathbb{N}\,. \]
Let $Z_{0}$ be (separable) the closed subspace generated by both sequences. Note that the restriction $F|_{Z_0}: Z_{0} \to X^\ast$ satisfies $\| F|_{Z_0} \|_{L} = 1$. However, it may happen that $F(Z_0) \nsubseteq Z_0$. To address this, we recursively construct an increasing sequence of separable closed subspaces $(Z_{n})_{n \geq 0}$, by taking 
\[
Z_{n+1} := \overline{\text{span}} \left( Z_n \cup F(Z_n) \right).
\]
Finally, we take $Z$ as the closure of the subspace $\cup_{n \geq 0} Z_n $. This subspace is separable, closed, satisfies $F(Z) \subset Z$, and $\| F|_{Z} \|_{L} = 1$, as desired.

Since $Z$ is separable, we can fix a Gaussian measure $\gamma$ on $Z$, and consider the sequence of functions
\[ F_{n}:X^\ast \longrightarrow X^\ast \quad , \quad F_{n}(x^\ast) = \int_{Z} F\left( x^\ast+ \frac{z^\ast}{n}\right) d \gamma(z^\ast)\,. \]
Note that they also satisfy $F_{n}(Z) \subset Z$ for every $n$. Arguing as in the proof of Theorem \ref{Theo:EqualityNumIndexesSeparable}, for every $x^\ast \in X^\ast$ we have
\[ \| F_{n}(x^\ast) - F(x^\ast)\| \leq \|F \|_{L} \, \frac{1}{n} \int_{Z} \| z^\ast\| d \gamma(z^\ast)\,, \]
so that $\lim_{n}{\| F_{n}(x^\ast) - F(x^\ast)\|} = 0$ uniformly on $X$, and thus 
\[  \lim_{n}{w_{L}(F_{n})} = w_{L}(F) \quad , \quad \lim_{n}{\| F_{n}\|_{L}} = \| F\|_{L} \,. \]
As in the aforementioned theorem, we can take a natural number $N$ large enough so that 
\[ 1- \varepsilon < \| F_{N}\|_{L} \leq 1 \quad , \quad n_{L}(X) - \varepsilon < w_{L}(F_{N}) \leq n_{L}(X) \,. \]
Let us denote by $G: Z \longrightarrow Z$ the restricted map $G = F_{N}|_{Z}$. Observe that $G$ is weakly Gateaux differentiable at every $z^\ast \in Z$ and satisfies
\[ 1-\varepsilon \leq \| G\|_{L}  \leq 1 \quad , \quad n_{L}(X) - \varepsilon < w_{L}(G) \leq n_{L}(X). \]
Moreover, arguing as in the proof of Theorem \ref{Theo:EqualityNumIndexesSeparable}, there exists an element $z_{0}^{\ast} \in S_{Z} \subset S_{X^{\ast}}$ such that
\[ \|dG[z_{0}^\ast] \| \geq 1- \varepsilon\,.\]
Let $\mathcal{U}$ be an ultrafilter on $(0,+\infty)$ containing the subsets of the form $(0, \varepsilon)$ for every $\varepsilon>0$, and define the function
\[ \psi: X^{\ast} \longrightarrow X^{\ast} \quad , \quad \langle x,\psi(x^{\ast}) \rangle = \lim_{t, \mathcal{U}} \frac{\langle x , F_{N}(z_{0}^{\ast}+tx^{\ast}) - F_{N}(z_{0}^{\ast}) \rangle}{t} \]
where $\lim_{t, \mathcal{U}}$ stands for the limit on $t \in (0, \infty)$ with respect to the ultrafilter $\mathcal{U}$. Note that this limit actually exists, since the function on the right is bounded on $t$:
\[ |\langle x, F_{N}(z_{0}^\ast + t x^\ast) - F_{N}(x_{0}^\ast) \rangle \rangle| \leq \| x \| \cdot \|F_{N} \|_{L} \cdot |t| \cdot \| x^\ast\|\,.\]
Moreover, we require that $\mathcal{U}$ contains all subsets of the form $(0, \varepsilon)$, so that whenever the following (ordinary) limit 
\[ \lim_{t \rightarrow 0^+} \frac{\langle x , F_{N}(z_{0}^{\ast}+tx^{\ast}) - F_{N}(z_{0}^{\ast}) \rangle}{t} \]
exists, its value coincides with $\langle x,\psi(x^{\ast}) \rangle$. This property will be useful later. We claim that the map $\psi$ satisfies the following properties:
\begin{enumerate}
\item[($i$)] $\psi$ is a Lipschitz map with $\| \psi\|_{L} \leq 1$. Indeed, for every $x^{\ast}_{1},x_{2}^{\ast} \in X$
\begin{align*} 
\| \psi(x_{1}^{\ast}) - \psi(x_{2}^{\ast}) \| 
& \leq \sup_{x \in S_{X}} \lim_{t, \mathcal{U}} \frac{|\langle x, F_{N}(z_{0}^{\ast} + tx_{1}^{\ast}) - F_{N}(z_{0}^{\ast}+tx_{2}^{\ast})\rangle|}{t}\\[2mm] 
& \leq \| F\|_{L} \, \|x_{1}^{\ast}-x_{2}^{\ast} \| \,.  
\end{align*}

\item[($ii$)] For each $z^{\ast} \in Z$ we have $ \psi(z^{\ast}) = dG[z_{0}^{\ast}](z^{\ast})$, so in particular, $\psi|_{Z}$ is linear. Indeed, since the ultrafilter contains subsets of the form $(0, \varepsilon)$ and $G$ is weakly Gateaux differentiable at $x_{0}^\ast$, then  for every $x \in X \subset X^{\ast \ast}$
\begin{align*}
\langle  x, \psi(z^{\ast})  \rangle 
& = \lim_{t, \mathcal{U}} \frac{\langle x , F_{N}(z_{0}^{\ast}+tz^{\ast}) - F_{N}(z_{0}^{\ast}) \rangle}{t}\\[2mm]
& = \lim_{t \to 0^+} \frac{\langle x , F_{N}(z_{0}^{\ast}+tz^{\ast}) - F_{N}(z_{0}^{\ast}) \rangle}{t} \\[2mm]
& = \langle x, dG[z_{0}^{\ast}](z^{\ast}) \rangle
\end{align*}
for every $x \in X$, which proves the claim.
\item[($iii$)] $w_{L}(\psi) \leq w_{L}(F_{N}) \leq n_{L}(X^{\ast}) + \varepsilon$. To check this claim, it suffices to prove that
\[ v_{\delta}(\psi) \leq v_{\delta}(F_N) \quad \mbox{for every } 0<\delta < 1\,,\]
and apply Lemma \ref{Lemm:numericalRadiusDual}. Let us then fix $0<\delta<1$, and take $x \in S_{X}$ and $x_{1}^\ast, x_{2}^\ast  \in X^{\ast}$ satisfying $x_{1}^\ast \neq x_{2}^{\ast}$ and
\[
\frac{\langle x, x_{1}^\ast - x_{2}^\ast\rangle}{\| x_{1}^\ast - x_{2}^\ast\|}>1-\delta\,.
\]
Note that the same inequality holds if we replace $x_{1}^\ast$ and $x_{2}^\ast$ with $tx_{1}^\ast + z_{0}^\ast$ and $tx_{2}^\ast + z_{0}^\ast$, respectively, for any $z_{0}^\ast \in X$ and $t >0$. Therefore. we can estimate
\begin{align*}
\frac{\langle x, \psi(x_{1}^\ast) - \psi(x_{2}^\ast) \rangle}{\| x_{1}^\ast - x_{2}^\ast \|} 
& = \lim_{t, \mathcal{U}}\frac{\langle x, F_{N}(z_{0}^\ast + t x_{1}^\ast) - F_{N}(z_{0}^\ast +t x_{2}^\ast) \rangle}{t \|x_{1}^\ast  - x_{2}^\ast\|}\\[2mm]
& =\lim_{t, \mathcal{U}}\frac{\langle x, F_{N}(z_{0}^\ast + t x_{1}^\ast) - F_{N}(z_{0}^\ast +t x_{2}^\ast) \rangle}{\| (z_{0}^\ast + t x_{1}^\ast) - (z_{0}^\ast + t x_{2}^\ast) \|}\\[2mm]
& \leq v_{\delta}(F_N)\,.
\end{align*}
Taking supremum over all $x, x_{1}^\ast, x_{2}^\ast$ under the above properties, we conclude that $v_{\delta}(\psi) \leq v_{\delta}(F_N)$, and the result follows.
\end{enumerate}
\noindent Next, we apply Corollary \ref{Coro:FromLipschitztoLinearInvariantMean} to $\psi$ and $Z$, to get a bounded linear operator $T: X^\ast \to X^\ast$ such that 
\[ T|_{Z} = \psi|_{Z} \quad , \quad \| T\| \leq \| \psi\|_{L} \leq 1 \quad , \quad w(T) \leq w_{L}(\psi)  < n_{L}(X^{\ast}) + \varepsilon. \]
In particular,
\[ T(z^{\ast}) = \psi(z^{\ast}) = dG[z_{0}^{\ast}](z^{\ast}) \quad \mbox{ for every } \quad z^{\ast} \in Z,\] 
so that
\[ \| T\| \geq \| dG[z_{0}^{\ast}]\| \geq 1- \varepsilon\,. \]
Finally, combining these relations, we deduce that
\[ n(X^{\ast}) \leq \frac{w(T)}{\| T\|} \leq \frac{n_{L}(X) + \varepsilon}{1-\varepsilon}\,. \]
Since $\varepsilon \in (0,1)$ was arbitrary, we conclude the result.
\end{proof}

\bigskip

\subsection*{Acknowledgment}

The author is grateful to V. Kadets, M. Martín, and J. Merí for helpful conversations and for the stimulating mathematical environment that contributed to the development of the ideas presented in this article. This work is based on some notes prepared by the author during the final stages of his PhD, around 2017, which were set aside for some time. Considering that the material may be of interest, he has now decided to make it available.



\end{document}